\DeclareMathOperator{\id}{id}
\DeclareMathOperator{\ev}{ev}
\DeclareMathOperator{\GL}{GL}
\DeclareMathOperator{\Unit}{U}
\DeclareMathOperator{\opm}{M}
\DeclareMathOperator{\oph}{H}
\newcommand{\de}{\delta}
\newcommand{\N}{\mathbb{N}}
\newcommand{\R}{\mathbb{R}}
\newcommand{\C}{\mathbb{C}}
\newcommand{\cB}{\mathcal{B}}
\newcommand{\cC}{\mathcal{C}}
\newcommand{\cD}{\mathcal{D}}
\newcommand{\cK}{\mathcal{K}}
\newcommand{\cZ}{\mathcal{Z}}
\newcommand{\bw}{\mathbf{w}}
\newcommand{\uX}{{\underline X}}
\newcommand{\uY}{{\underline Y}}
\newcommand{\ux}{{\underline x}}
\newcommand*{\mtx}[1]{\opm_{#1}(\C)}
\newcommand*{\her}[1]{\oph_{#1}(\C)}
\newcommand{\Langle}{\mathop{<}\!}
\newcommand{\Rangle}{\!\mathop{>}}
\newcommand{\px}{\C\!\Langle \ux\Rangle}
\DeclareFontFamily{U}{mathx}{}
\DeclareFontShape{U}{mathx}{m}{n}{<-> mathx10}{}
\DeclareSymbolFont{mathx}{U}{mathx}{m}{n}
\DeclareMathAccent{\widehat}{0}{mathx}{"70}
\DeclareMathAccent{\widecheck}{0}{mathx}{"71}
\newtheorem{thm}{Theorem}%[section]
\newtheorem{prop}[thm]{Proposition}
\newtheorem{cor}[thm]{Corollary}
\theoremstyle{remark}
\newtheorem{rem}[thm]{Remark}
\title{
Linear matrix pencils and noncommutative convexity
}
\author{Jurij Vol\v{c}i\v{c}}
\address{Department of Mathematics, Drexel University, Pennsylvania}
\email{jurij.volcic@drexel.edu}
\thanks{Supported by the NSF grant DMS-2348720}
\date{\today}
\keywords{Linear matrix inequality, matrix convexity, linear pencil, free spectrahedron, free semialgebraic set, Positivstellensatz}
\subjclass[2020]{47A63, 14P10, 15A22, 13J30, 52A05}
\begin{document}

\begin{abstract}
Hermitian linear matrix pencils are ubiquitous in control theory, operator systems, semidefinite optimization, and real algebraic geometry. This survey reviews the fundamental features of the matricial solution set of a linear matrix inequality, the free spectrahedron, from the perspective of free real algebraic geometry. Namely, among matricial solution sets of noncommutative polynomial inequalities, free spectrahedra are precisely the convex ones. Furthermore, a procedure for detecting free spectrahedra and producing their representing linear matrix pencils is discussed. Finally, free spectrahedra admit a perfect Positivstellensatz, leading to a semidefinite programming formulation of eigenvalue optimization over convex matricial sets constrained by noncommutative polynomial inequalities.
\end{abstract}

\maketitle

\section{Introduction}

A linear matrix pencil is a multivariate affine matrix $L=A_0+A_1x_1+\cdots+A_nx_n$, where $A_j$ are constant square matrices. 
General linear matrix pencils are ubiquitous in matrix theory, numerical analysis, control theory and algebraic geometry. When the coefficients $A_j$ are hermitian matrices, pencils give rise to linear matrix inequalities (LMIs) $L\succeq0$. These are feasible regions of semidefinite programs in mathematical optimization \cite{WSV}, and form a pillar of modern systems engineering \cite{BEFB}.
From the theoretical point of view, hermitian linear matrix pencils are investigated in real algebraic geometry \cite{BCR}, with a special focus on their determinants \cite{HV,Bra,NT}.

More recently, there has been a rising interest in matrix solutions of an LMI.
These are the tuples of hermitian matrices $\uX=(X_1,\dots,X_n)$ such that $L(\uX)=A_0\otimes I+A_1\otimes X_1+\cdots+A_n\otimes X_n$ is positive semidefinite. The collection of all such tuples (of arbitrary sizes) is called the free spectrahedron of $L$. There are at least two ways of looking at free spectrahedra, both responsible for fast advances in their theory and applications. On one hand, free spectrahedra are standard examples of matrix convex sets, and are firmly intertwined with operator systems and completely positive maps in operator algebras and quantum information theory \cite{EW,HKM2,DDSS,Kri,BN}. On the other hand, they are distinguished cases of solution sets of noncommutative polynomial inequalities investigated by free real algebraic geometry and noncommutative optimization
\cite{HMP,HM0,HKM0,BKP,KVV1}.
This survey reviews the interplay between convexity and semialgebraicity in dimension-free matrix variables, and the role of linear matrix pencils in it.

A noncommutative polynomial is a polynomial expression in freely noncommuting variables $x_1,\dots,x_n$. Given a square matrix of noncommutative polynomials $f$, its associated free semialgebraic set $\cD_f$ is the collection of $n$-tuples of hermitian matrices $\uX$ (of same size) such that $f(\uX)$ is positive semidefinite. The adjective ``free'' signifies that the size of matrices in $\uX$ can be arbitrary, and that $x_j$ are free of any inherent relations; on the other hand, ``semialgebraic'' signifies that this set arises from a polynomial inequality (in real algebraic geometry, solution sets of commutative polynomial inequalities are more precisely called basic closed semialgebraic sets).
In particular, free spectrahedra are special cases of free semialgebraic sets. But how special? One can quickly observe that free spectrahedra are convex in the following sense: if $L$ is a linear matrix pencil and $\uX,\uY$ are two matrix tuples of the same size such that $L(\uX),L(\uY)$ are positive semidefinite, then $L(\frac12 \uX+\frac12 \uY)=\frac12 L(\uX)+\frac12 L(\uY)$ is also positive semidefinite. When classical commutative polynomial inequalities are considered, it is not hard to see that there are convex semialgebraic sets that are not solution sets of LMIs. On the contrary, in the dimension-free framework introduced above, Helton and McCullough \cite{HM} showed that free spectrahedra are precisely the convex free semialgebraic sets. This theorem indicates that linear matrix pencils are the sole source of convexity in free real algebraic geometry. On the other hand, it also raises the following question:
given a nonlinear $f$, how does one verify whether $\cD_f$ is convex? After all, $\cD_f$ consists of countably many semialgebraic sets, one for each matrix size, and is therefore a rather large set. A computational criterion for deciding whether $\cD_f$ is a free spectrahedron was derived in \cite{HKMV}.
Among the various benefits of having an LMI representation of a convex free semialgebraic sets, one is the
characterization of noncommutative polynomials that are positive semidefinite on a given free spectrahedron.
This result is called a convex Positivstellensatz \cite{HKM1}, in analogy with sum-of-squares positivity certificates in real algebraic geometry dating back to Hilbert's 17th problem.
However, the conclusions of the convex Positivstellensatz are much stronger than its classical commutative analogs or other noncommutative Positivstellens\"atze.
This survey focuses on the aforementioned three milestones (detection of free spectrahedra, convexity of free semialgebraic sets, convex Positivstellensatz) and related results.

\section{Preliminaries}\label{s:prelim}

Throughout the paper let $\oplus$ and $\otimes$ denote the direct sum and the Kronecker product of matrices, respectively.
Let $\ux=(x_1,\dots,x_n)$ be a tuple of freely noncommuting variables, and let $\px$ be the free $\C$-algebra generated by $x_1,\dots,x_n$. Elements of $\px$, and more generally, matrices over $\px$, are called \emph{noncommutative polynomials}. We also endow $\px$ with the unique skew-linear involution $*$ determined by $x_j^*=x_j$, and extend it to $\opm_d(\px)=\mtx{d}\otimes \px$ as the conjugate transpose on $\mtx{d}$. The \emph{degree} $\deg f$ of $f\in\opm_d(\px)$ is the length of the longest word in $\ux$ that appears with a nonzero coefficient in any entry  of $f$.
The central objects of this survey are affine (degree-one) matrices over $\px$, which are also called \emph{(linear matrix) pencils}, i.e.,
$$L=A_0+A_1x_1+\cdots+A_nx_n \quad\text{where }
A_j\in\mtx{d}.$$
We say that $d$ is the \emph{size} of $L$.
If $A_0=I$ then $L$ is \emph{monic}, and furthermore \emph{irreducible} if $A_1,\dots,A_n$ generate $\mtx{d}$ as an algebra. 

Next, we wish to view noncommutative polynomials as multivariate matrix functions; this perspective is key in free analysis \cite{KVV}.
Given $f\in\opm_d(\px)$ and $\uX=(X_1,\dots,X_n)\in\mtx{k}^n$ let $f(\uX)\in\mtx{dk}$ be the evaluation of $f$ at $\uX$ defined in a natural way. Formally, $f(\uX)$ it is the image of $f$ under the map
$$\opm_d(\px)=\mtx{d}\otimes \px
\xrightarrow{\id\otimes \ev_{\uX}}
\mtx{d}\otimes \mtx{k}=\mtx{dk},$$
where $\ev_{\uX}:\px\to \mtx{k}$ is the unital homomorphism determined by $\ev_{\uX}(x_j)=X_j$.
Let $\her{k}$ denote the real space of hermitian matrices in $\mtx{k}$. If $\uX\in\her{k}^n$, then $\ev_{\uX}$ is a $*$-homomorphism, and $f(\uX)\in\her{dk}$ for all $f=f^*\in\opm_d(\px)$.

We can now consider noncommutative polynomial inequalities.
A \emph{free semialgebraic set} \cite{HM0,HKM0} is the \emph{positivity domain} of a hermitian noncommutative polynomial $f=f^*\in\opm_d(\px)$,
$$\cD_f=\bigcup_{k\in\N} \cD_f(k),\quad 
\cD_f(k)=\{\uX\in\her{k}^n\colon f(\uX)\succeq0\}. 
$$
Note that $\cD_{f_1\oplus\cdots\oplus f_\ell}=\cD_{f_1}\cap\cdots\cap \cD_{f_\ell}$.
Free semialgebraic sets are therefore dimension-independent noncommutative analogs of basic closed semialgebraic sets in real algebraic geometry \cite{BCR}. 
If $L$ is a hermitian pencil (i.e., all its coefficients are hermitian matrices), then $\cD_L$ is called a \emph{free spectrahedron} or an \emph{LMI domain}, while $L$ is also called an \emph{LMI representation} of the set $\cD_L$. This terminology stems from the fact that $\cD_L(1)\subseteq\R^n$ is a spectrahedron in the classical sense \cite{Vin}.
Two standard examples of free spectrahedra (for $r\in\R_{>0}$) are the \emph{free cube}
$$\cC_r =\cD_C,\qquad C=\begin{pmatrix}
r& x_1\\ x_1 & r
\end{pmatrix}\oplus\cdots\oplus
\begin{pmatrix}
r& x_n\\ x_n & r
\end{pmatrix},
$$
consisting of hermitian tuples with spectra in $[-r,r]$, and the \emph{free ball}
$$\cB_r =\cD_B,\qquad B=\begin{pmatrix}
1&&&x_1\\
&\ddots&&\vdots\\
&&1&x_n\\
x_1&\cdots&x_n&r^2
\end{pmatrix},
$$
consisting of hermitian tuples $\uX$ such that the eigenvalues of $X_1^2+\cdots+X_n^2$ are at most $r^2$ (the unspecified entries in the pencil are zero).

Certain features of a free spectrahedron $\cD_L$ are determined by the (classical) spectrahedron $\cD_L(1)$.
\begin{prop}\label{p:level1}
Let $L$ be a hermitian pencil, and $\ell=
\alpha_0+\alpha_1x_1+\cdots+\alpha_nx_n 
\in\R\!\Langle \ux\Rangle$.
\begin{enumerate}[(a)]
\item $\cD_L\neq\emptyset$ if and only if $\cD_L(1)\neq\emptyset$.
\item $\ell|_{\cD_L}=0$ if and only if $\ell|_{\cD_L(1)}=0$.
\item $\cD_L\subseteq \cC_r$ if and only if $\cD_L(1)\subseteq \cC_r(1)$.
\end{enumerate}
\end{prop}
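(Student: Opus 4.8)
The plan is to deduce all three equivalences from a single \emph{compression principle}: if $\uX=(X_1,\dots,X_n)\in\cD_L(k)$ and $w\in\C^k$ is a unit vector, then the scalar tuple
$$\ux^w:=\big(w^*X_1w,\dots,w^*X_nw\big)\in\R^n$$
lies in $\cD_L(1)$. Indeed, writing $L=A_0+\sum_jA_jx_j$, the matrix $I_d\otimes w$ is an isometry $\C^d\to\C^{dk}$, so compressing $L(\uX)=A_0\otimes I_k+\sum_jA_j\otimes X_j\succeq0$ by it yields
$$(I_d\otimes w)^*\,L(\uX)\,(I_d\otimes w)=A_0+\textstyle\sum_j(w^*X_jw)A_j=L(\ux^w)\succeq0 .$$
In each of (a)--(c) the ``only if'' direction is immediate, because $\cD_L(1)$ is precisely the set of scalar (i.e.\ $1\times1$) tuples in $\cD_L$; the substance lies in the converse.

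For (a): if $\cD_L\neq\emptyset$, pick $\uX\in\cD_L(k)$ and any unit $w\in\C^k$; then $\ux^w\in\cD_L(1)$, so $\cD_L(1)\neq\emptyset$. For (b): assume $\ell|_{\cD_L(1)}=0$ and take $\uX\in\cD_L(k)$. For every unit $w\in\C^k$,
$$w^*\ell(\uX)w=\alpha_0+\textstyle\sum_j\alpha_j\,w^*X_jw=\ell(\ux^w)=0,$$
since $\ux^w\in\cD_L(1)$. As $\ell(\uX)$ is hermitian and its quadratic form vanishes identically, $\ell(\uX)=0$, hence $\ell|_{\cD_L}=0$. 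For (c): note $\cC_r(1)=[-r,r]^n$, and a hermitian tuple $\uY$ belongs to $\cC_r$ iff $-rI\preceq Y_j\preceq rI$ for all $j$, equivalently $|w^*Y_jw|\le r$ for all $j$ and all unit vectors $w$. Assuming $\cD_L(1)\subseteq\cC_r(1)$, take $\uX\in\cD_L(k)$: for each unit $w$ the $j$-th entry $w^*X_jw$ of the point $\ux^w\in\cD_L(1)\subseteq\cC_r(1)$ lies in $[-r,r]$, so $-rI\preceq X_j\preceq rI$ and $\uX\in\cC_r(k)$.

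Finally, two bookkeeping remarks. If $\cD_L(1)=\emptyset$ then the hypotheses of (b) and (c) hold vacuously; but the compression principle forces $\cD_L(k)=\emptyset$ for every $k$ (any $\uX\in\cD_L(k)$ would yield a point of $\cD_L(1)$), so the conclusions are vacuous too---this is exactly the content of (a), which one may therefore prove first and then invoke. And in (b) one uses the elementary fact that a hermitian $M\in\her k$ with $w^*Mw=0$ for all $w$ is zero (evaluate at $w=e_i$ and $w=e_i\pm e_j$, $w=e_i\pm\sqrt{-1}\,e_j$, or appeal to polarization). I do not expect a genuine obstacle here: the whole proposition reduces to the displayed compression identity, after which (a)--(c) become three instances of one pattern---recovering a matricial property of $\uX$ from the family of scalar ``shadows'' $\ux^w$, $w$ a unit vector.
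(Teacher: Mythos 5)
Your proof is correct and takes essentially the same route as the paper: the whole argument is the compression of $L(\uX)$ by the isometry $I\otimes w$ to produce scalar points $w^*\uX w\in\cD_L(1)$, from which (a)--(c) follow (the paper merely phrases (b) and (c) contrapositively, which is the same computation). One cosmetic slip: in (a) the immediate direction is ``if'' (since $\cD_L(1)\subseteq\cD_L$), not ``only if'', but you do prove the substantive forward implication anyway, so nothing is missing.
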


\begin{proof}
In each of the asserted statements, only one implication requires a justification. For $\uX\in\her{k}^n$ and $v\in\C^k$ let $v^*\uX v=(v^*X_1v,\dots,v^*X_nv)\in\R^n$.
\\
($\Rightarrow$) in (a): let $\uX\in \cD_L(k)$. Then for every unit vector $v\in \C^k$ we have $L(v^*\uX v)=(I\otimes v)^*L(\uX)(I\otimes v)\ge0$, and thus $\cD_L(1)\neq0$.
\\
($\Leftarrow$) in (b): suppose $\ell(\uX)\neq0$ for $\uX\in \cD_L(k)$. Then there exists a unit vector $v\in \C^k$ such that $\ell(v^*\uX v)=v^*\ell(\uX)v\neq0$. Since $v^*\uX v\in\cD_L(1)$ as seen in the proof of (a), it follows that $\ell$ does not vanish on $\cD_L(1)$.
\\
($\Leftarrow$) in (c): suppose $\cD_L(k)\not\subseteq \cC_r(k)$ for some $k\in\N$. Then there are $\uX\in\cD_L(k)$ and a unit vector $v\in\C^k$ such that $v^*X_jv \notin[-r,r]$ for some $j\in\{1,\dots,n\}$. Therefore $v^*\uX v\in \cD_L(1)\setminus \cC_r(1)$.
\end{proof}

Let $L$ be a hermitian pencil.
The properties of $\cD_L(1)$ in Proposition \ref{p:level1} can be handled with \emph{commutative} real algebra. In particular, \cite[Theorem 4.3.3]{KS} gives a computational certificate for $\cD_L(1)$ being non-empty, while \cite[Theorem 3.4.1]{KS} allows one to either assert that $\cD_L(1)$ has nonempty interior, or determine the affine hull of $\cD_L(1)$ in $\R^n$.
Furthermore, \cite[Corollary 4.4.2]{KS} gives an algebraic certificate for $\cD_L(1)$ being bounded.
In view of Proposition \ref{p:level1} and the aforementioned results on classical spectrahedra from \cite{KS}, 
it is for various purposes sufficient to assume that $L$ is \emph{monic}, i.e., $L=I+A_1x_1+\cdots+A_nx_n$. Indeed: by looking at $\cD_L(1)$, one can restrict to its affine hull and apply a suitable change of coordinates to ensure that $0$ lies in the interior of $\cD_L$. By \cite[Proposition 2.1]{HKM2}, one can then effectively replace $L$ with a monic hermitian pencil.
For this reason, many statements on free spectrahedra in the literature adopt the monic property as a standard assumption.

\section{Detection of free spectrahedra}\label{s:det}

Let $f$ be a hermitian noncommutative polynomial with $f(0)\succ0$. This section presents a procedure for detecting whether $\cD_f$ is a free spectrahedron, and for producing its LMI representation.
This procedure is based on a block-triangular form of pencils and their determinantal varieties, to which we turn next.

Two monic pencils $L,M \in\opm_d(\px)$ are \emph{similar} or \emph{unitarily similar} if $M=ULU^{-1}$ for $U\in\GL_n(\C)$ or $U\in\Unit_n(\C)$, respectively.
In general, a monic pencil $L$ is similar to
\begin{equation}\label{e:triang}
\begin{pmatrix}
L_1 & \star &\cdots &\star \\
0 & \ddots & \ddots & \vdots \\
\vdots & \ddots & \ddots & \star \\
0 & \cdots & 0 & L_\ell
\end{pmatrix}
\end{equation}
where each of $L_1,\dots,L_\ell$ is either 1 or an irreducible pencil,
by Burnside's theorem \cite[Corollary 5.23]{Bre} on the existence of common invariant subspaces of the coefficients of $L$.

Given $f\in\opm_d(\px)$ let
$$\cZ_f=\bigcup_{k\in\N} \cZ_f(k),\quad 
\cZ_f(k)=\{\uX\in\mtx{k}^n\colon \det f(\uX)=0\}$$
be its \emph{free locus} \cite{KV,HKV}. Free loci represent the geometric counterpart of factorization in the free algebra \cite{HKV1}. In particular, they carry the essential information about irreducible monic pencils, as follows.

\begin{thm}[{\cite[Theorem 3.11, Corollary 5.5]{KV} and \cite[Theorems C and E]{HKV}}]\label{t:irr}
Let $L$ be an irreducible monic pencil.
\begin{enumerate}[(a)]
    \item Any (hermitian) irreducible monic pencil whose free locus equals $\cZ_L$ is (unitarily) similar to $L$.
    \item $\cZ_L(k)$ is an irreducible hypersurface in $\mtx{k}^n$ for all but finitely many $k\in\N$.
    \item If $L$ is hermitian, then the boundary of $\cD_L(k)\subseteq\her{k}^n$ is Zariski dense in $\cZ_L(k)\subseteq\mtx{k}^n$ for all but finitely many $k\in\N$.
\end{enumerate}
\end{thm}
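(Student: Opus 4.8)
I would prove the three parts in the order (b), (a), (c), since (a) and (c) both rest on (b).

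\emph{Part (b).} Here $\cZ_L(k)=\{\uX\in\mtx{k}^n:\det L(\uX)=0\}$. Writing $p_k(\uX)=\det L(\uX)$ --- a polynomial of degree at most $dk$ in the $nk^2$ coordinates of $\mtx{k}^n$, with $p_k(0)=\det I=1$ --- the content of (b) is that $p_k$ is an \emph{irreducible} polynomial for all but finitely many $k$; this is slightly stronger than, and implies, the stated irreducibility of the hypersurface $\cZ_L(k)$ (in particular $p_k$ is then nonconstant, so $\cZ_L(k)$ is genuinely $(nk^2-1)$-dimensional). The strategy is to exploit that $A_1,\dots,A_n$ generate $\mtx{d}$: for $k$ large, a generic $\uX\in\cZ_L(k)$ should be a smooth point of $\cZ_L(k)$ whose kernel is a line $\C w$ on which the data induced by the $A_j$ and the entries of $\uX$ admits no proper invariant subspace, and a hypersurface all of whose generic points are of this ``simple'' type cannot decompose; put differently, a nontrivial factorization of $p_k$ holding for infinitely many $k$ would, after passing to a common irreducible factor and using that free loci detect factorization in $\px$, contradict $A_1,\dots,A_n$ generating $\mtx{d}$. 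The finitely many exceptional $k$ are small ones, at which a generic $k$-dimensional representation of $\px$ is too small for these genericity statements. This irreducibility result --- in particular the uniform control needed to rule out a decomposition of the whole family $\{\cZ_L(k)\}_k$ --- is the technical core of the theorem and the step I expect to be the main obstacle.

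\emph{Part (a).} Assume (b), and let $M$ be a second irreducible monic pencil with $\cZ_M=\cZ_L$. For $k$ large, $\det L(\uX)$ and $\det M(\uX)$ are irreducible polynomials with the same zero set $\cZ_L(k)=\cZ_M(k)$, hence equal up to a constant, which is $1$ by evaluation at $\uX=0$; so $\det L(\uX)=\det M(\uX)$ for all large $k$. It remains to recover an irreducible monic pencil, up to similarity, from the function $\uX\mapsto\det L(\uX)$. For this I would use the realization theory of noncommutative rational functions: since $A_1,\dots,A_n$ generate $\mtx{d}$, the natural linear representation of the matrix-valued rational function $L(\ux)^{-1}$ on the state space $\C^d$ is minimal, and --- e.g.\ via the logarithmic derivatives $\tr(L(\ux)^{-1}A_j)=\partial_{x_j}\log\det L$, which are determined by $\det L$ and whose minimal realizations again live on $\C^d$ --- the equality $\det L=\det M$ forces the minimal realizations attached to $L$ and to $M$ to be isomorphic. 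This simultaneously gives that $L$ and $M$ have equal size $d$ and produces $U\in\GL_d(\C)$ with $M=ULU^{-1}$. For the hermitian refinement, apply the involution: from $M=ULU^{-1}$ and $L=L^*$, $M=M^*$ one gets that $U^*U$ commutes with every $A_j$; since these generate $\mtx{d}$, $U^*U=\lambda I$ with $\lambda>0$, and $\lambda^{-1/2}U$ is the required unitary.

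\emph{Part (c).} Let $L=L^*$ and $k$ be large, so $\cZ_L(k)\subseteq\mtx{k}^n$ is an irreducible hypersurface by (b). From $L(\uX)^*=L(\uX^*)$ one gets $\overline{p_k(\uX)}=p_k(\sigma(\uX))$ for the conjugate-linear involution $\sigma(\uX)=(X_1^*,\dots,X_n^*)$ with fixed locus $\her{k}^n$; hence $\cZ_L(k)$ is defined over $\R$, with real point set $\cZ_L(k)\cap\her{k}^n$. If $\uX\in\partial\cD_L(k)$ then $L(\uX)\succeq0$ by closedness but not $L(\uX)\succ0$ (otherwise $\uX$ would be interior), so $\det L(\uX)=0$; thus $\partial\cD_L(k)\subseteq\cZ_L(k)\cap\her{k}^n$. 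Since $L(0)=I\succ0$ the origin is interior to $\cD_L(k)$, and $\cD_L(k)\neq\her{k}^n$ for a nontrivial pencil (otherwise $I+tA_j\otimes H\succeq0$ for all real $t$ and hermitian $H$, forcing $A_j=0$); so $\cD_L(k)$ is a closed convex body and $\partial\cD_L(k)$ is an $(nk^2-1)$-dimensional topological manifold. On the other hand, the singular locus of the irreducible hypersurface $\cZ_L(k)$ has complex dimension at most $nk^2-2$, so its real points have real dimension at most $nk^2-2$; hence $\partial\cD_L(k)$ cannot lie inside it and contains a point $\uX_1$ that is a smooth real point of $\cZ_L(k)$. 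Near $\uX_1$ the real locus $\cZ_L(k)\cap\her{k}^n$ is then a real submanifold of dimension $\dim_\C\cZ_L(k)=nk^2-1$, and $\partial\cD_L(k)$, being itself $(nk^2-1)$-dimensional, contains a nonempty relatively open subset of it. By the standard fact that a subset of an irreducible real variety which fills a piece of dimension equal to the complex dimension is Zariski dense, $\partial\cD_L(k)$ is Zariski dense in $\cZ_L(k)$; it is here that the irreducibility from (b) is essential --- it is what upgrades density in one component to density in all of $\cZ_L(k)$.
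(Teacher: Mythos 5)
The first thing to note is that the survey does not prove Theorem \ref{t:irr} at all: it is imported from \cite[Theorem 3.11, Corollary 5.5]{KV} and \cite[Theorems C and E]{HKV}, so your attempt has to be measured against the proofs in those references, and measured that way it has a genuine gap exactly where you flag it, namely part (b). What you offer for (b) is a heuristic --- ``for large $k$ a generic point of $\cZ_L(k)$ should be smooth with one-dimensional kernel and carry no invariant structure, so the hypersurface cannot decompose'' --- with no mechanism that converts the hypothesis that $A_1,\dots,A_n$ generate $\mtx{d}$ into irreducibility of the polynomial $\det L(\uX)$ for large $k$, no control of the exceptional levels, and at the crucial moment an appeal to ``free loci detect factorization in $\px$'', which is essentially the theorem being proved (this correspondence is the main content of \cite{HKV,HKV1}); the sketch is therefore circular at its core. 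Eventual irreducibility of $\det L(\uX)$ is the technical heart of the cited results --- it requires, among other things, a genuine analysis of the locus of points of $\cZ_L(k)$ whose kernel has dimension greater than one, together with representation-theoretic input --- and since your (a) and (c) are both derived from (b), the whole proposal rests on the one step that is not proved.

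Two further remarks. In (a), the middle step --- recovering $L$ up to similarity from $\det L(\uX)=\det M(\uX)$ --- is under-justified as written: minimality of the obvious size-$d$ realization of $L^{-1}$ gives nothing by itself, because you do not know $L^{-1}=M^{-1}$, and the detour through $\tr(L^{-1}A_j)$ is left vague. A clean repair is the expansion $\log\det L(\uX)=\sum_{m\ge1}\tfrac{(-1)^{m+1}}{m}\sum_{|w|=m}\tr(A_w)\tr(X_w)$ near $0$: linear independence of the functions $\uX\mapsto\tr(X_w)$ (over cyclic classes, at levels $k\ge m$) gives $\tr(A_w)=\tr(B_w)$ for all words, and since both coefficient tuples act irreducibly, the classical trace characterization of semisimple representations yields equal sizes and a simultaneous similarity; your hermitian refinement via $U^*U$ lying in the commutant is correct. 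Note also that \cite{KV} obtains (a) directly from equality of free loci, without the (later) eventual-irreducibility theorem of \cite{HKV}, so deducing (a) from (b) is a legitimate but different organization of the material. Part (c), by contrast, is essentially complete given (b): the boundary of the proper closed convex set $\cD_L(k)$ with $0$ in its interior is an $(nk^2-1)$-manifold, the real points of the singular locus have dimension at most $nk^2-2$, invariance of domain produces a full-dimensional real piece of $\cZ_L(k)$ inside the boundary, and irreducibility upgrades this to Zariski density; this is the kind of dimension count one expects for \cite[Theorem E]{HKV}.
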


For the purpose of this survey it suffices to consider free loci of monic pencils, because for every noncommutative polynomial $f$ with $\det f(0)\neq0$ there exists a monic pencil $L\in \opm_d(\px)$ such that $\cZ_L=\cZ_f$. Such an $L$ is called a \emph{linearization} of $f$. A very direct way of finding a linearization of $f$ is to apply Higman's trick \cite[Section 5.8]{Coh}
$$\cZ_{a+bc}=\cZ_{\left(\begin{smallmatrix}
a&-b\\
c&I
\end{smallmatrix}\right)}, \qquad
a\in \px\!{}^{\alpha\times\alpha},\ 
b\in \px\!{}^{\alpha\times\beta},\ 
c\in \px\!{}^{\beta\times\alpha},
$$
repeatedly to monomials in $f$, until only terms of degree at most 1 are left in every entry.
Alternatively, as in \cite{HKMV}, one can apply the realization theory for noncommutative rational functions from control theory \cite{BGM} to $f^{-1}$ in order to obtain a linearization $L$ of $f$. This approach in particular allows for removing the redundancies in the size of $L$ using linear-algebraic tools. In any case, a linearization $L$ of $f$ as defined above is not unique; one can always choose a basis such that $L$ is block-triangular as in \eqref{e:triang}, with irreducible blocks on the diagonal.

By Theorem \ref{t:irr}(c), the positivity domain of an irreducible hermitian pencil $L$ determines its free locus, while the latter determines $L$ by Theorem \ref{t:irr}(a). Also, the free locus of \eqref{e:triang} detects the free loci of its irreducible diagonal blocks through its hypersurface decomposition by Theorem \ref{t:irr}(b). These observations are key to the following criterion for recognizing free spectrahedra.

\begin{thm}[{\cite[Theorem 1.1]{HKMV}}]\label{t:comp}
Let $f$ be a hermitian noncommutative polynomial with $f(0)\succ0$, and $L$ its linearization. After a basis change, one can assume that $L$ is in block-triangular form \eqref{e:triang}, with irreducible blocks on the diagonal that are either hermitian or not similar to hermitian pencils. Let $\widehat{L}$ be the direct sum of hermitian irreducible blocks, and let $\widecheck{L}$ be the direct sum of the remaining irreducible blocks.

Then $\cD_f$ is a free spectrahedron if and only if $\cZ_{\widecheck{L}}\cap \cD_{\widehat{L}}\subseteq \cZ_{\widehat{L}}$, in which case $\cD_f=\cD_{\widehat{L}}$.
\end{thm}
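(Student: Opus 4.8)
The plan is to prove the forward implication together with the identity $\cD_f=\cD_{\widehat L}$ by a rigidity argument, and the converse by a connectedness argument, both resting on Theorem \ref{t:irr}. Reductions first: since unitary and general similarity preserve free loci and the hermitian/non-hermitian classification of the diagonal blocks, I would take $L$ to be literally of the form \eqref{e:triang} with every hermitian block literally hermitian and the trivial $1\times1$ blocks deleted, so that $\cZ_f=\cZ_L=\cZ_{\widehat L}\cup\cZ_{\widecheck L}$; and, producing the linearization by iterated Higman's trick, $\det L(\uX)=\det f(\uX)$, hence $\det f(\uX)=\det\widehat L(\uX)\cdot\det\widecheck L(\uX)$ for every $\uX$. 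As $\cD_{\widehat L}$ is plainly a free spectrahedron, the content is: (a) $\cZ_{\widecheck L}\cap\cD_{\widehat L}\subseteq\cZ_{\widehat L}$ implies $\cD_f=\cD_{\widehat L}$; and (b) $\cD_f$ being a free spectrahedron implies that inclusion, hence, by (a), $\cD_f=\cD_{\widehat L}$.

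For (b): write $\cD_f=\cD_M$ with $M$ monic hermitian; a common invariant subspace of hermitian matrices is reducing, so $M$ is unitarily similar to $\bigoplus_j M_j$ with $M_j$ irreducible hermitian, which I take pairwise non-similar and each non-redundant, so $\cD_f=\bigcap_j\cD_{M_j}$. For $k\gg 0$, $\partial\cD_f(k)=\partial\cD_M(k)$ is Zariski dense in $\bigcup_j\cZ_{M_j}(k)$ by Theorem \ref{t:irr}(b),(c) and non-redundancy, and it also lies in $\cZ_f(k)=\bigcup_i\cZ_{L_i}(k)$; comparing irreducible hypersurface components for infinitely many $k$ and invoking Theorem \ref{t:irr}(a) matches each $M_j$, up to unitary similarity, with a hermitian block of $L$ (so that block enters $\widehat L$), and symmetrically shows that each $L_i$ occurring in $\widehat L$ has $\cZ_{L_i}=\cZ_{M_j}$ for some $j$, whence $\cD_{L_i}=\cD_{M_j}\supseteq\cD_f$. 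The first statement gives $\cD_f\supseteq\bigcap_{i\in H}\cD_{L_i}=\cD_{\widehat L}$, the second gives $\cD_f\subseteq\cD_{\widehat L}$, so $\cD_f=\cD_{\widehat L}$; and then on $\operatorname{int}\cD_{\widehat L}(k)=\operatorname{int}\cD_f(k)$ one has $f\succ0$, so $\det\widecheck L\neq0$ there, i.e.\ $\cZ_{\widecheck L}\cap\cD_{\widehat L}\subseteq\partial\cD_{\widehat L}\subseteq\cZ_{\widehat L}$.

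For (a), one inclusion is easy: the hypothesis says precisely that $\det\widecheck L$, and hence $\det f=\det\widehat L\cdot\det\widecheck L$, does not vanish on the convex---in particular connected---set $\operatorname{int}\cD_{\widehat L}(k)\ni 0$, so, since $f(0)\succ0$, the signature of $f$ is constant there and $f\succ0$ on $\operatorname{int}\cD_{\widehat L}(k)$; passing to closures (a spectrahedron with an interior point is the closure of its interior) gives $\cD_{\widehat L}\subseteq\cD_f$. The main obstacle is the reverse inclusion $\cD_f\subseteq\cD_{\widehat L}$ in (a), i.e.\ excluding components of $\cD_f(k)$ disjoint from $\cD_{\widehat L}(k)$. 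I would track signatures along segments $t\mapsto t\uX$ from $0$ for generic $\uX$: at the first $t=\tau$ with $\det f(t\uX)=0$, every block of $L$ is positive definite on $[0,\tau)$, so $[0,\tau)\uX\subseteq\operatorname{int}\cD_{\widehat L}(k)$ and, by the hypothesis, $\det\widecheck L(\tau\uX)\neq0$, so the vanishing at $\tau$ is caused by $\det\widehat L$; a parity count then forces $\det f$ to change sign at $\tau$, so $f$ leaves the positive semidefinite cone exactly where the ray exits $\cD_{\widehat L}(k)$. The delicate point is propagating this past the first crossing, where outside $\cD_{\widehat L}$ the factor $\det\widecheck L$ can again vanish and decouple the parities of the negative-eigenvalue counts of $f$ and of $\widehat L$; handling it seems to need another appeal to Theorem \ref{t:irr}, to the effect that the hypersurfaces $\cZ_{\widecheck L}(k)$ cannot bound $\cD_f(k)$ from inside. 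Failing a direct argument, one can instead use the parity bookkeeping only to prove that each $\cD_f(k)$ is convex and then combine the Helton--McCullough theorem (convex free semialgebraic sets are free spectrahedra) with part (b); a Zariski-density argument at the end disposes of the genericity assumption on $\uX$.
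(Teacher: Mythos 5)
The crux of the theorem is exactly the step you leave open, so the proposal has a genuine gap. In your part (a), the inclusion $\cD_{\widehat{L}}\subseteq\cD_f$ is fine (constant signature on the convex set $\{\widehat{L}\succ0\}$, then take closures), but the reverse inclusion $\cD_f\subseteq\cD_{\widehat{L}}$ is the actual content of the result, and neither of your two routes establishes it: the signature-tracking along rays admittedly breaks down after the first crossing, where $\det\widecheck{L}$ may vanish and decouple the parities, and the fallback is not an argument --- convexity of each $\cD_f(k)$ is essentially equivalent to the inclusion you are missing, and no ``parity bookkeeping'' proving it is supplied. Moreover, invoking Theorem \ref{t:cvx} without boundedness is circular in the logical order of this survey, since Section \ref{s:cvx} derives the unbounded case of Theorem \ref{t:cvx} \emph{from} Theorem \ref{t:comp}; using only the bounded Helton--McCullough theorem would require intersecting with a free ball, which again presupposes control of $\cD_f$ outside $\cD_{\widehat{L}}$. (The survey itself does not prove Theorem \ref{t:comp}; it cites \cite{HKMV}, where this direction is the technical heart, handled through an analysis of kernels of $f$ along the boundary and the structure of the linearization coming from a minimal realization of $f^{-1}$, not through determinant parities.)

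There are also soft spots in part (b) and in your reductions. The ``symmetric'' claim that every hermitian block of $\widehat{L}$ has free locus equal to some $\cZ_{M_j}$ does not follow from your density argument, which matches loci only in one direction; and with the survey's definition of $\cD_f$ and an arbitrary linearization it can genuinely fail: for $f=(1-x_1)^2$ one may take $L$ with diagonal blocks $1-x_1$, so $\cD_{\widehat{L}}(k)=\{X\colon X\preceq I\}$ while $\cD_f(k)=\her{k}$, and no block of $\widehat{L}$ is matched by any $M_j$. This shows your argument cannot succeed without using either the connected-component definition of $\cD_f$ used in \cite{HM,HKMV} or minimality properties of the linearization, neither of which appears in your proof. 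Two further technical points: Theorem \ref{t:irr}(a) requires equality of free loci at \emph{all} levels, whereas your hypersurface comparison yields equality only along an infinite set of levels, so an additional ``free loci are determined by large levels'' result is needed; and the identity $\det f=\det\widehat{L}\cdot\det\widecheck{L}$ holds (up to a nonzero constant) only for the specific Higman linearization normalized to be monic, not for a general linearization as defined in Section \ref{s:det}, where only $\cZ_L=\cZ_f$ is required.
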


Theorem \ref{t:comp} gives rise to the following
computationally feasible procedure for determining whether a free semialgebraic set is a free spectrahedron. Let $f$ be a hermitian noncommutative polynomial with $f(0)\succ0$.

\begin{enumerate}
    \item Find a linearization $L$ of $f$ using Higman's trick or realization theory for $f^{-1}$; then determine the irreducible blocks $L_j$ appearing on the diagonal of the block-triangular form of $L$ (see \cite{CIK} for a polynomial-time algorithm to extract the $L_j$).
    \item Decide which of the $L_j$ are similar to hermitian pencils. This can be done by solving the semidefinite program
    $$Q\succeq I,\quad QL_j^*=L_jQ.$$
    If no such $Q$ exist, then $L_j$ is not similar to a hermitian pencil; if such a $Q$ exists, then $\sqrt{Q}^{-1}L_j\sqrt{Q}$ is hermitian.
    The irreducible blocks that are similar to hermitian pencils can then be replaced by hermitian blocks.
    \item Set $\widehat{L}$ to be the direct sum of hermitian $L_j$, and let $\widecheck{L}$ be the direct sum of the remaining $L_j$. One can further reduce the size of $\widehat{L}$ using the algorithm in \cite[Subsection 4.6.1]{HKM2}, again in the form of a semidefinite program, to remove redundant blocks (that do not affect $\cD_{\widehat{L}}$).
    \item Verify whether the condition $\cZ_{\widecheck{L}}\cap \cD_{\widehat{L}}\subseteq \cZ_{\widehat{L}}$ is satisfied.
    While not quite straightforward, this condition can also be verified through a finite sequence of feasibility semidefinite programs; see \cite[Algorithm 4.3.1]{HKMV}.
\end{enumerate}

Step (3) in the above procedure refers to possible redundancy in the LMI representation describing a free spectrahedron. This is not unexpected; for example, if $L_1,L_2$ are such that $\cD_{L_1}\subseteq\cD_{L_2}$, then the free spectrahedron $\cD_{L_1}=\cD_{L_1\oplus L_2}$ admits LMI representations $L_1$ and $L_1\oplus L_2$.
We say that a hermitian pencil $L$ is \emph{minimal} if it has the minimal size among all hermitian pencils representing $\cD_L$. The following corollary shows that minimal LMI representations of free spectrahedra are essentially unique.

\begin{cor}\label{c:unique}
Let $L$ and $M$ be minimal monic hermitian pencils. Then $\cD_L=\cD_M$ if and only if $L$ and $M$ are unitarily similar.
\end{cor}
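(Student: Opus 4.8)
The plan is to establish the nontrivial direction by reconstructing $L$, block by block, from the free locus $\cZ_L$, and to reduce everything to the irreducible case governed by Theorem~\ref{t:irr}.

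The reverse implication is a one‑line computation: if $M=ULU^{-1}$ with $U$ unitary, then $M(\uX)=(U\otimes I)L(\uX)(U\otimes I)^*$ for every $\uX$, so $\cD_L=\cD_M$. For the forward implication, observe first that the coefficients of a hermitian pencil are hermitian, so any common invariant subspace of $A_1,\dots,A_n$ is reducing; hence Burnside's theorem lets us assume, after replacing $L$ and $M$ by unitarily similar pencils (this changes neither $\cD_L,\cD_M$ nor minimality), that $L=\bigoplus_{i=1}^p L_i$ and $M=\bigoplus_{j=1}^q M_j$ with all $L_i,M_j$ irreducible monic hermitian. Minimality makes these direct sums irredundant: deleting a block $L_i$, or deleting one of two unitarily similar blocks $L_i,L_{i'}$, would produce a strictly smaller hermitian pencil with positivity domain $\cD_L$. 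In particular the $\cZ_{L_i}$ are pairwise distinct by Theorem~\ref{t:irr}(a) (and likewise the $\cZ_{M_j}$), and for each $i$ there is a witness $\uY^{(i)}$ with $L_{i'}(\uY^{(i)})\succeq0$ for all $i'\neq i$ but $L_i(\uY^{(i)})\not\succeq0$.

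The heart of the proof is the identity $\cZ_L(k)=\cZ_M(k)$ for all but finitely many $k$. On one hand, every topological boundary point of $\cD_L(k)=\cD_M(k)$ lies in $\cZ_L(k)\cap\cZ_M(k)$, because there $L$ (resp.\ $M$) is positive semidefinite and singular. On the other hand, $\partial\cD_L(k)$ is Zariski dense in $\cZ_L(k)=\bigcup_i\cZ_{L_i}(k)$ for cofinitely many $k$: by Theorem~\ref{t:irr}(c) the boundary of $\cD_{L_i}(k)$ alone is Zariski dense in $\cZ_{L_i}(k)$, and using $\uY^{(i)}$ together with convexity and $0\in\operatorname{int}\cD_L(k)$ one shows that a Zariski dense subset of these boundary points can be achieved with all other blocks $L_{i'}$ strictly positive definite, hence that this dense subset of $\cZ_{L_i}(k)$ consists of genuine boundary points of $\cD_L(k)$. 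Thus $\cZ_L(k)=\overline{\partial\cD_L(k)}^{\,\mathrm{Zar}}=\overline{\partial\cD_M(k)}^{\,\mathrm{Zar}}=\cZ_M(k)$ for all large $k$. For such $k$, Theorem~\ref{t:irr}(b) makes each $\cZ_{L_i}(k)$ and $\cZ_{M_j}(k)$ an irreducible hypersurface, and by the previous sentence both families are families of \emph{distinct} irreducible hypersurfaces with the same union; uniqueness of the decomposition of an algebraic set into irreducible components forces $p=q$ and a bijection $\sigma$ with $\cZ_{L_i}=\cZ_{M_{\sigma(i)}}$. Then Theorem~\ref{t:irr}(a) gives that $L_i$ and $M_{\sigma(i)}$ are unitarily similar, so $L=\bigoplus_i L_i$ is unitarily similar to $\bigoplus_i M_{\sigma(i)}$, which is a block permutation of $M$; hence $L$ and $M$ are unitarily similar.

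The main obstacle is the density claim in the third paragraph: that each active block's hypersurface $\cZ_{L_i}(k)$ shows up densely on the boundary of the \emph{intersection} $\cD_L(k)$, not just on $\partial\cD_{L_i}(k)$. Theorem~\ref{t:irr}(c) supplies density only for a single irreducible pencil, and promoting it to the minimal pencil $L$ is where irredundancy must genuinely be used, via the witnesses $\uY^{(i)}$ that push boundary points of $\cD_{L_i}$ into the interior of the remaining constraints. A secondary nuisance is the ``all but finitely many $k$'' bookkeeping, which one must handle carefully enough to ensure the block matching $\sigma$ does not depend on the level $k$.
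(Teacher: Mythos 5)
Your proposal is correct and follows essentially the same route as the paper's own argument (the alternative proof via Theorem \ref{t:irr}, following Kriel): decompose the minimal pencil into an irredundant direct sum of pairwise non-similar irreducible hermitian blocks, use the irredundancy witnesses together with convexity to show that the boundary of $\cD_L$ meets each $\cZ_{L_j}$ in a Zariski-dense (real codimension one) set at all large levels, deduce $\cZ_L=\cZ_M$, and finish with Theorem \ref{t:irr}(a)--(b). The density step you single out as the main obstacle is precisely the step the paper also leaves as ``not hard to deduce,'' so your write-up matches the paper's proof in both strategy and level of detail.
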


Corollary \ref{c:unique} was first established in \cite[Theorem 1.2]{HKM2} under the assumption that $\cD_L$ is bounded. This assumption was relaxed in \cite[Section 6.2]{DDSS}, and completely removed in \cite[Theorem 3.1]{Zal}. Alternatively, Corollary \ref{c:unique} also follows from Theorem \ref{t:irr}; this approach was taken in \cite[Corollary 5.17]{Kri}. Indeed, every minimal monic hermitian pencil is unitarily similar to a direct sum of pairwise non-similar irreducible hermitian pencils $L=L_1\oplus\cdots\oplus L_\ell$, such that $\bigcap_{k\neq j}\cD_{L_k}\not\subseteq \cD_{L_j}$ for all $j$. From here it is not hard to deduced that for all large enough $k$, the intersection of the boundary of $\cD_{L}(k)$ and the boundary of $\cD_{L_j}(k)$ has real codimension 1, for every $j$. Thus $\cZ_L=\bigcup_j\cZ_{L_j}$ by Theorem \ref{t:irr}(c), so $L$ is unique up to unitary similarity by Theorem \ref{t:irr}(a).

\section{Convex free semialgebraic sets}\label{s:cvx}

This section addresses convexity of free semialgebraic sets, which is the distinguishing geometric property of LMI domains.

A family of sets $\cK=(\cK(k))_{k\in\N}$ with $\cK(k)\subseteq \her{k}^n$ is \emph{convex} if $\cK(k)$ is a convex subset of the real affine real space $\her{k}^n$, for every $k\in\N$. 
There is also a stronger notion of convexity for such families of sets, which considers not only convexity of each set by itself, but also the relationship between the sets.
We say that $\cK$ is \emph{matrix convex} if 
for all $k_1,\dots,k_\ell,k\in\N$
and $V_j\in\C^{k_j\times k}$ such that $V_1^*V_1+\cdots+V_\ell^*V_\ell=I$,
$$\uX^{(1)}\in \cK(k_1),\dots, \uX^{(\ell)}\in\cK(k_\ell) \quad\implies\quad
V_1^*\uX^{(1)}V_1+\cdots+V_\ell^*\uX^{(\ell)}V_\ell \in\cK(k).
$$
Here, $V^*\uX V=(V^*X_1V,\dots,V^*X_nV)$. In particular, if $\cK$ is matrix convex then it is convex.
Observe that free spectrahedra are examples of matrix convex sets. Indeed, if $L$ is a hermitian pencil and $\uX^{(j)}\in\cD_L(k_j)$ and $V\in\C^{k_j\times k}$ are as above, then
\begin{align*}
&L\Big(V_1^*\uX^{(1)}V_1+\cdots+V_\ell^*\uX^{(\ell)}V_\ell\big) \\
=\,& (I\otimes V_1)^*L(\uX^{(1)})(I\otimes V_1)
+\cdots+
(I\otimes V_\ell)^*L(\uX^{(\ell)})(I\otimes V_\ell)
\succeq\, 0,
\end{align*}
so $\cD_L$ is matrix convex, and thus convex.
The following theorem by Helton and McCullough is a remarkable converse of this observation.

\begin{thm}[{\cite[Theorem 1.4]{HM}}]\label{t:cvx}
Let $f=f^*\in\opm_d(\px)$ and $f(0)\succ0$.
Then $\cD_f$ is convex if and only if it is a free spectrahedron.
\end{thm}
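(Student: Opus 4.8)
The plan is to prove the nontrivial direction: if $\cD_f$ is convex, then it is a free spectrahedron. The strategy is to study the geometry of the boundary of $\cD_f$ at matrix points, and exploit the interaction between convexity (a condition at each fixed matrix level) and the dimension-free nature of $\cD_f$ (which links different matrix levels). The starting hypothesis $f(0)\succ0$ means $0$ is an interior point of $\cD_f(k)$ for every $k$, so $\cD_f(k)$ is a full-dimensional convex body in $\her{k}^n$ containing the origin. The heart of the matter will be to produce, from this family of convex bodies, a single hermitian pencil $L$ with $\cD_L=\cD_f$.

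First I would pass to a \emph{minimal degree} defining polynomial and analyze the boundary $\partial\cD_f(k)$. At a smooth boundary point $\uX$ one has a supporting hyperplane, and the Hessian of (an eigenvalue branch of) $f$ restricted to the tangent space must be positive semidefinite because of convexity; this is the ``matrix-positivity of the second derivative on the tangent plane'' condition. The key algebraic input is that $\uX$ ranges over matrices of \emph{all} sizes, and the Hessian of $f$ is itself a noncommutative polynomial (a \emph{biquadratic} form in the direction vector, with polynomial coefficients evaluated at $\uX$). So convexity forces a noncommutative polynomial identity: a certain symmetric biquadratic form, evaluated at arbitrary matrix tuples and arbitrary direction vectors in the relevant subspace, is positive semidefinite. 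One then invokes a structural theorem on noncommutative polynomials whose Hessian is ``matrix positive'': such polynomials have degree at most $2$, and in fact the relevant quadratic form is a sum of squares of linear (pencil-type) terms. This rigidity is what collapses an a priori high-degree $f$ down to linear data.

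Next I would assemble the pencil. Having learned that the ``second-order'' data of $f$ along the boundary is governed by linear forms, one builds a monic hermitian pencil $L$ out of these linear forms (essentially: the Hessian being a sum of hermitian squares of the form $\Lambda(\text{direction})^*\Lambda(\text{direction})$ yields coefficient matrices $A_j$, and monicity comes from $f(0)\succ0$ together with the normalization of Section~\ref{s:prelim}). One then checks $\cD_f=\cD_L$: the inclusion $\cD_f\subseteq\cD_L$ follows because $L$ is built to vanish (lose rank) exactly where $\cD_f$ has boundary, so $L\succeq0$ on $\cD_f$; the reverse inclusion uses that both are convex matrix sets with the same boundary behavior at every level, together with the origin being interior to both, so a separation/extreme-point argument at each fixed $k$ closes the gap. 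Matrix convexity of $\cD_L$ (established in the paragraph preceding the theorem) and of $\cD_f$ is the glue that makes the level-by-level comparison consistent.

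\textbf{The main obstacle.} The crux — and the genuinely hard step — is the rigidity statement: showing that convexity of $\cD_f$ at all matrix levels forces the Hessian of $f$ to be a sum of hermitian squares of pencils, hence forces the ``linear'' structure. Proving that a noncommutative polynomial with matrix-positive Hessian has degree $\le 2$, and identifying the pencil, requires a careful bookkeeping of the highest-degree terms of the Hessian and an argument that they cannot survive positivity against generic large matrices and generic vectors (one typically tests against tuples built from truncated shifts or free-algebra-like representations to expose the leading term). Extracting the pencil's coefficients from the sum-of-squares decomposition, and then verifying that this pencil genuinely recaptures $\cD_f$ rather than a larger set, is where the bulk of the technical work lies; everything else is convex geometry at a fixed level plus the dimension-free gluing supplied by matrix convexity.
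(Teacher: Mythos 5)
There is a genuine gap, and it sits exactly at the step you yourself identify as the crux. From convexity of $\cD_f$ you do \emph{not} get that the Hessian of $f$ is matrix positive; you only get positivity of the second directional derivative in a very constrained sense: at a boundary point $\uX$, compressed against vectors in $\ker f(\uX)$, and only along directions tangent to the boundary (a ``relaxed/clamped Hessian'' condition). The structural theorem you invoke --- a noncommutative polynomial whose Hessian is matrix positive at all points and in all directions has degree at most two (this is the Helton--McCullough characterization of matrix convex polynomials) --- has a hypothesis that is simply not available here, so the claimed collapse of $f$ to linear data does not follow. Passing from tangential-Hessian positivity at boundary points to any sum-of-squares-of-pencils structure is precisely the hard open-heart surgery; in the literature this route only succeeds under substantial extra hypotheses (irreducibility or minimality of the defining polynomial, smoothness/nondegeneracy of the boundary, curvature conditions), and the general theorem was in fact proved by a different mechanism. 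A secondary gap: even granting a pencil $L$ built from such linear data, your verification of $\cD_L\subseteq\cD_f$ (``same boundary behavior, separation closes the gap'') is an assertion, not an argument; nothing in the proposal rules out that $\cD_L$ strictly contains $\cD_f$.

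For comparison, the proof this survey relies on runs quite differently: first one shows that a convex free semialgebraic set with $f(0)\succ0$ (and bounded) is \emph{matrix} convex; then the Effros--Winkler matricial Hahn--Banach theorem exhibits $\cD_f$ as an intersection of (a priori infinitely many) free spectrahedra; finally an analysis of the kernels of $f(\uX)$ at boundary points, exploiting that $f$ is a polynomial, shows finitely many separating pencils suffice, so $\cD_f$ is a free spectrahedron. The survey itself only supplies the last missing piece of the statement as quoted, namely the removal of the boundedness hypothesis: assuming $\cD_f$ is not a free spectrahedron, Theorem \ref{t:comp} produces a witness $\uX\in\cZ_{\widecheck{L}}\cap\cD_{\widehat{L}}\setminus\cZ_{\widehat{L}}$, and intersecting with a free ball $\cB_r$ containing $\uX$ gives a bounded convex set $\cD_{f\oplus B}$ that the bounded theorem declares a free spectrahedron while Theorem \ref{t:comp} (applied to the linearization $L\oplus B$) says it is not --- a contradiction. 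Your proposal addresses neither the Effros--Winkler separation route nor this reduction, and the Hessian shortcut it proposes instead does not go through as stated.
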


In particular, convexity and matrix convexity coincide for free semialgebraic sets defined by noncommutative polynomials that are positive definite at some scalar point. As opposed to Section \ref{s:det}, which presented a (rather intricate) algebraic criterion for recognizing free spectrahedra, Theorem \ref{t:cvx} shows that convexity, a purely geometric property, is the distinguishing feature of free spectrahedra among free semialgebraic sets. Combining Theorems \ref{t:comp} and \ref{t:cvx} thus gives us a computationally feasible scheme for verifying convexity of free semialgebraic sets.
Furthermore, note that the classical (commutative) analog of Theorem \ref{t:comp} fails: for example, $\{(x_1,x_2)\colon x_1^4+x_2^4\le 1\}$ is convex in $\R^2$ but not a spectrahedron (cf. \cite{HV}).

To be precise, the proof in \cite[Theorem 1.4]{HM} of the forward implication of Theorem \ref{t:cvx} rests on a further assumption that $\cD_f$ is bounded (i.e., $\cD_f(1)$ is bounded, cf. Proposition \ref{p:level1}).
On the other hand, \cite[Theorem 1.4]{HM} is also technically stronger than Theorem \ref{t:irr}, due to a slightly different definition of $\cD_f$ in \cite{HM} (namely, as a connected component of the invertibility set of $f$; the definition of $\cD_f$ in Section \ref{s:prelim} is used in this survey for the sake of simplicity).
At the end of the section we give an argument why the boundedness assumption is not required in Theorem \ref{t:irr}.

The proof of Theorem \cite[Theorem 1.4]{HM} consists of three parts. Firstly, it is established that a convex free semialgebraic set $\cD_f$ with $f(0)\succ0$ is a matrix convex set. Secondly, the Effros-Winkler theorem \cite{EW}, the analog of the Hahn-Banach separation theorem for matrix convex sets, then implies that $\cD_f$ is the intersection of infinitely many free spectrahedra. Thirdly, an intricate analysis of the boundary of $\cD_f$ (more precisely, the behavior of kernels of $f(\uX)$ for $\uX$ in the boundary of $\cD_f$) and a modification of the Effros-Winkler theorem (leveraging that $\cD_f$ is not a general matrix convex set, but is bounded and has a rather Noetherian nature since it is given by a noncommutative polynomial) show that $\cD_f$ is the intersection of \emph{finitely} many free spectrahedra, and thus a free spectrahedron. An alternative to this last part is given in \cite[Theorem 4.2]{Kri} using Nash manifolds from real algebraic geometry.

Let us now show that \cite[Theorem 1.4]{HM} (i.e., Theorem \ref{t:cvx} assuming boundedness of the free semialgebraic set) and Theorem \ref{t:comp} imply that Theorem \ref{t:cvx} holds without the boundedness assumption.

\begin{proof}[From {\cite[Theorem 1.4]{HM}} to Theorem \ref{t:cvx}]
Let $f=f^*\in\opm_d(\px)$ be such that $f(0)\succ0$ and $\cD_f$ is convex, but not necessarily bounded. Let $L,\widehat{L},\widecheck{L}$ be as in Theorem \ref{t:comp}. Suppose $\cD_f$ is not a free spectrahedron; then by Theorem \ref{t:comp} there exists
$$\uX\in\cZ_{\widecheck{L}}(k)\cap \cD_{\widehat{L}}(k),\qquad \uX\notin \cZ_{\widehat{L}}(k).$$
Let $r>0$ be such that eigenvalues of $X_1^2+\cdots +X_n^2$ are not larger than $r^2$, and let
$$B=\begin{pmatrix}
1&&&x_1\\
&\ddots&&\vdots\\
&&1&x_n\\
x_1&\cdots&x_n&r^2
\end{pmatrix}$$
be the pencil that determines the free ball $\cB_r$ from Section \ref{s:prelim}. Then $(f\oplus B)(0)\succ0$, and
$\cD_{f\oplus B}=\cD_f\cap \cB_r$ is convex and bounded. By Theorem \cite[Theorem 1.4]{HM} it follows that $\cD_{f\oplus B}$ is a free spectrahedron.
On the other hand, $B$ is an irreducible hermitian pencil, $B(\uX)\succeq0$, and $L\oplus B$ is a linearization of $f\oplus B$ in a block-triangular form as in \eqref{e:triang}. Since
$$\uX\in\cZ_{\widecheck{L}}(k)\cap \cD_{\widehat{L}\oplus B}(k),\qquad \uX\notin \cZ_{\widehat{L}}(k),$$
if follows by Theorem \ref{t:comp} that $\cD_{f\oplus B}$ is not a free spectrahedron, a contradiction. Therefore $\cD_f$ is a free spectrahedron.
\end{proof}

For a further analysis of free spectrahedra from the matrix convexity perspective, and in particular their extreme and exposed points and faces, see \cite{EHKM,Kri,EH,KSt}.

\section{Positivstellensatz for free spectrahedra}\label{s:pos}

In this section, we consider yet another crucial property of free spectrahedra that is rather exceptional among general free semialgebraic sets. Namely, positive semidefiniteness of noncommutative polynomials on a free spectrahedron admits an effective sums-of-squares certificate. Since all convex free semialgebraic sets (with nonempty interior) are free spectrahedra by Theorem \ref{t:cvx}, this algebraic certificate of positivity is called a \emph{convex Positivstellensatz}.

\begin{thm}[{\cite[Theorem 1.1]{HKM1}}]\label{t:pos}
Let $f=f^*\in\opm_d(\px)$, and let $L$ be a monic hermitian pencil of size $e$. Then $f\succeq0$ on $\cD_L$ if and only if
\begin{equation}\label{e:pos}
f=\sum_{i=1}^M s_i^*s_i+\sum_{j=1}^N v_j^*L v_j
\end{equation}
for some $s_i\in\opm_d(\px),\ v_j\in\px^{e\times d}$ with 
$\deg s_i,\deg v_j \le \lfloor \frac{\deg f}{2}\rfloor$.
\end{thm}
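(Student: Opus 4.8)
The plan is to prove the nontrivial (``only if'') direction; the converse is immediate since each summand $s_i^*s_i$ and $v_j^*Lv_j$ is positive semidefinite on $\cD_L$. The standard route is a Hahn--Banach/GNS separation argument in the style of Helton--McCullough Positivstellens\"atze, combined with a degree-bounding (``truncation'') step. First I would fix the degree bound $\delta=\lfloor\frac{\deg f}{2}\rfloor$ and introduce the finite-dimensional real vector space $W$ of hermitian matrix polynomials in $\opm_d(\px)$ of degree at most $\deg f$, together with the convex cone
\[
\cC=\Bigl\{\,\textstyle\sum_{i} s_i^*s_i+\sum_j v_j^*Lv_j \ :\ s_i\in\opm_d(\px),\ v_j\in\px^{e\times d},\ \deg s_i,\deg v_j\le\delta\,\Bigr\}\cap W .
\]
The goal becomes: if $f=f^*\in W$ and $f\notin\cC$, then $f$ is not positive semidefinite on $\cD_L$. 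To run separation I need $\cC$ to be closed in $W$. This is the first technical point: closedness of truncated quadratic modules is not automatic, but here one can use that $L$ is \emph{monic}, so $\cC$ contains a neighborhood of $0$ in an appropriate sense (an ``Archimedean''-type argument on the truncated level), or alternatively invoke the Carath\'eodory-plus-compactness argument used in \cite{HKM1}: bounded-rank sums of squares of bounded degree form a closed set. I would cite or reprove this closedness lemma.

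Granting closedness, the Minkowski/Eidelheit separation theorem in the finite-dimensional space $W$ yields a linear functional $\lambda:W\to\R$ with $\lambda\ge0$ on $\cC$ and $\lambda(f)<0$. Next I would perform the GNS-type construction: extend $\lambda$ suitably to bilinear data on polynomials of degree $\le\delta$, so that $\langle p,q\rangle:=\lambda(q^*p)$ (interpreted with the matrix structure, i.e.\ on $\C^d\otimes(\text{words of length}\le\delta)$) defines a positive semidefinite sesquilinear form because $\lambda\ge0$ on the sum-of-squares part of $\cC$. Quotient by the kernel and complete to get a finite-dimensional Hilbert space $H$; the left multiplications by $x_1,\dots,x_n$ give hermitian operators $\widehat X_1,\dots,\widehat X_n$ on $H$ (hermiticity uses $x_j^*=x_j$ and the degree truncation, which is where the bound $\delta$ is essential and where the argument is most delicate — multiplication by $x_j$ may raise degree past $\delta$, so one works with a flatness/truncation argument or restricts to the degree-$(\delta-1)$ part and controls the defect). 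The condition $\lambda\ge0$ on the terms $v_j^*Lv_j$ translates precisely into $L(\widehat X_1,\dots,\widehat X_n)\succeq0$ on $H$, i.e.\ $\widehat X:=(\widehat X_1,\dots,\widehat X_n)\in\cD_L$. Finally, evaluating $f$ against the cyclic-type vectors recovers $\langle f(\widehat X)\,\xi,\xi\rangle=\lambda(f)<0$ for a suitable vector $\xi$ coming from the identity, contradicting $f\succeq0$ on $\cD_L$. This forces $f\in\cC$, which is exactly \eqref{e:pos} with the stated degree bounds.

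The main obstacle I anticipate is the interaction between the GNS construction and the degree truncation: unlike in the unbounded-degree (non-effective) Positivstellensatz, here the left-multiplication operators need not preserve the truncated space, so one cannot naively read off a tuple $\widehat X$ of hermitian matrices with $L(\widehat X)\succeq0$. The resolution — the heart of \cite{HKM1} — is that the \emph{monic} pencil $L$ together with the precise exponent $\lfloor\frac{\deg f}{2}\rfloor$ provides just enough ``room'': the quadratic module term $v_j^*Lv_j$ with $\deg v_j\le\delta$ lands in $W$, and a careful bookkeeping (a middle-term / Hankel-type rank argument, or Kriel's Nash-manifold alternative) shows the truncated form is still representable. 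A secondary, more routine obstacle is verifying that the cone $\cC$ is genuinely closed and has nonempty interior relative to the hermitian subspace of $W$, so that separation applies; this is handled by the bounded-degree, bounded-number-of-squares compactness argument together with monicity of $L$.
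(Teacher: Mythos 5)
Your proposal follows essentially the same route as the paper's: separate $f$ from the truncated quadratic module by a linear functional $\lambda$ with $\lambda(f)<0$, then run a truncated GNS construction in which the compressed left multiplications give a hermitian tuple $\uX$ with $L(\uX)\succeq0$ while $f(\uX)$ fails to be positive semidefinite. The only minor difference is that the paper's sketch takes $\lambda$ on $\opm_d(\px)_{2\lfloor\deg f/2\rfloor+2}$ strictly positive on nonzero elements of the cone (so the inner product is definite and no closedness-of-cone lemma or quotient is needed), whereas you separate from a closed cone and quotient by the kernel; this is a cosmetic variation, not a different argument.
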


It is hard to overstate the significance of Theorem \ref{t:pos}. 
Firstly, its special case for $L=1$ (proven independently by Helton \cite{Hel} and Mccullough \cite{McC}) describes globally positive semidefinite noncommutative polynomials, and can thus be viewed as the resolution of a free analog of Hilbert's 17th problem. Secondly, it is well-known that there is no comparable analog of Theorem \ref{t:pos} for (classical) spectrahedra in real algebraic geometry.
Thirdly, other Positivstellens\"atze for noncommutative polynomials either offer weaker certificates (e.g., the Positivstellensatz for bounded free semialgebraic sets \cite{HM0}, which involves constant term perturbations) or have much more modest reach (e.g., Positivstellens\"atze for spherical isometries and noncommutative unitary groups \cite{HMP,KVV1}).
Finally, the existence of degree bounds in the sums-of-square representation \eqref{e:pos} renders this certificate computationally feasible via semidefinite programming (see Subsection \ref{s:opt} below for details). The number of terms in \eqref{e:pos} can also be bounded by $M+N\le 1+d^2\sum_{j=0}^{\deg f+1}n^j$ using Carath\'eodory's theorem on convex hulls (see Remark \cite[Remark 1.2]{HKM1}).

\begin{rem}
The name ``convex Positivstellensatz'' for Theorem \ref{t:pos} is justified by Theorem \ref{t:cvx}, which guarantees existence of LMI representations for convex free semialgebraic sets. However, the use of an LMI representation in Theorem \ref{t:pos} is essential. For example, if $f=1-x_1$, then $\cD_{f^3}=\cD_f$ is a free spectrahedron, but $f$ cannot be written as $\sum_i s_i^*s_i+\sum_j v_j^*f^3 v_j$ for $s_i,v_j\in\px$. Note that this assertion reduces to a calculation in the polynomial ring $\R[x_1]$; suppose $f=\sigma_0+f^3 \sigma_1$ where $\sigma_0,\sigma_1$ are sums of squares of polynomials in $\R[x_1]$. Then one concludes that $\sigma_0$ is divisible by $f$, and therefore by $f^2$ since $\sigma_0$ is a sum of squares. But then $1=f(\frac{\sigma_0}{f^2}+f^2 \sigma_1)$, a contradiction.
\end{rem}

\begin{rem}
It is also important that $L$ in Theorem \ref{t:pos} is monic (or at least has a positive definite constant term). For example, consider the hermitian pencil
$$L=\begin{pmatrix}
x_2&x_1\\x_1&0
\end{pmatrix}.$$
Then $\cD_L=\{(0,X_2)\colon X_2\succeq0\}$, so $x_1\succeq0$ on $\cD_L$. Suppose $x_1=\sum_i s_i^*s_i+\sum_j v_j^*L v_j$ for noncommutative polynomials $s_i,v_j$. Since $x_1$ and $L$ are both linear, it follows that $s_i(0)=0$ for all $i$, and then $x_1=\sum_j v_j(0)^*L v_j(0)$, which is clearly impossible.
\end{rem}

The proof of the forward implication in Theorem \ref{t:pos} (the converse is straightforward) is based on a truncated version of the Gelfand-Naimark-Segal construction. For $\de\in\N$ let $\opm_d(\px)_\de$ denote the subspace of noncommutative polynomials in $\opm_d(\px)$ of degree at most $\de$. If $f$ is not of the form \eqref{e:pos}, one can find a functional $\lambda$ on $\opm_d(\px)_{2\de+2}$ for $\de=\lfloor \frac{\deg f}{2}\rfloor$ such that $\lambda(f)<0$ and $\lambda$ is positive at all nonzero elements of the form as on the right-hand side in \eqref{e:pos}. Such a functional gives rise to an inner product $\langle p,q\rangle=\lambda(p^*q)$ on $\opm_d(\px)_{\de+1}$. Left multiplication by $x_1,\dots,x_n$ on $\opm_d(\px)$ can then be truncated (or compressed) to self-adjoint operators $X_1,\dots,X_n$ on the finite-dimensional inner product space $\opm_d(\px)_\de$. Using the relationship between the inner product and the functional $\lambda$, one can then show that $L(\uX)\succeq0$, while $f(\uX)$ is not positive semidefinite due to $\lambda(f)<0$.

Variants of the Gelfand-Naimark-Segal construction like the one described above are ubiquitous in free real algebraic geometry. In particular, they extend Theorem \ref{t:pos} to a Positivstellensatz for \emph{noncommutative rational functions} that are positive semidefinite on free spectrahedra \cite{Pas,Vol}. In a different direction, techniques behind Theorems \ref{t:pos} and \ref{t:irr} also lead to a characterization of noncommutative polynomials that define a given free spectrahedron and its boundary. This conceptual reverse of Theorem \ref{t:comp} is called a \emph{Randstellensatz}.

\begin{thm}[{\cite[Theorem 1.1]{HKN} and \cite[Corollary 8.7]{HKV}}]\label{t:rand}
Let $f=f^*\in\opm_d(\px)$, and let $L$ be a monic hermitian pencil of size $d$, minimal for $\cD_L$. Then $f\succeq0$ on $\cD_L$ and $\ker L(\uX)\subseteq \ker f(\uX)$ for all $\uX\in\cD_L$ if and only if
$$
f=L\left(\sum_i p_ip_i^*\right)L+\sum_j (q_jL+C_j)^*L(q_jL+C_j)
$$
for some $p_i\in\px^{d\times 1},\ q_j\in\opm_d(\px)$ and $C_j\in\mtx{d}$ satisfying $C_jL=LC_j$.
\end{thm}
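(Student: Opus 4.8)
The plan is to adapt the truncated Gelfand–Naimark–Segal argument described above for Theorem \ref{t:pos} to the present setting, where the target of positivity is not just $\cD_L$ but the "boundary-along-$L$" condition $\ker L(\uX)\subseteq\ker f(\uX)$. The easy direction is again a direct check: if $f$ has the asserted form, then each summand $L(\sum_i p_ip_i^*)L$ and $(q_jL+C_j)^*L(q_jL+C_j)$ is manifestly positive semidefinite on $\cD_L$, and moreover every such summand annihilates any vector in $\ker L(\uX)$ (using $C_jL=LC_j$ so that $(q_jL+C_j)$ maps $\ker L(\uX)$ into $\ker L(\uX)$, on which $L(\uX)$ vanishes), so $\ker L(\uX)\subseteq\ker f(\uX)$ follows. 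Note the minimality of $L$ (size $d$, equal to the matrix size of $f$) is what makes the condition $\ker L(\uX)\subseteq\ker f(\uX)$ meaningful and tight.

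For the forward direction, first I would set up the appropriate convex cone. Let $\cC$ be the set of all $f=f^*\in\opm_d(\px)$ expressible in the asserted form with the polynomial degrees suitably truncated (degree of $p_i$, $q_j$ at most $\lfloor\frac{\deg f}{2}\rfloor$ or so, as in Theorem \ref{t:pos}); this is a finite-dimensional convex cone once we fix a degree bound, and it is closed after the usual Carathéodory/compactness argument. If $f\notin\cC$, Hahn–Banach produces a functional $\lambda$ on the relevant truncation $\opm_d(\px)_{2\de}$ with $\lambda(f)<0$ and $\lambda\ge0$ on $\cC$. The key structural point is that $\lambda$ being nonnegative on the first family of generators $L(\sum p_ip_i^*)L$ means the sesquilinear form $\langle p,q\rangle:=\lambda(Lpq^*L)$ (on column vectors of bounded degree) is positive semidefinite, and nonnegativity on the second family $(q_jL+C_j)^*L(q_jL+C_j)$ controls how left multiplication interacts with $L$ modulo the kernel of that form. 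One then builds a GNS-type Hilbert space from $\langle\cdot,\cdot\rangle$, represents $x_1,\dots,x_n$ by the compressions of left multiplication to get a tuple $\uX=(X_1,\dots,X_n)\in\her{k}^n$, and checks that $L(\uX)\succeq0$ so $\uX\in\cD_L$, while the $C_j$-part of the cone forces $\ker L(\uX)\subseteq\ker f(\uX)$ on the GNS space; then $\lambda(f)<0$ translates to $f(\uX)$ having a negative direction \emph{within} $\ker L(\uX)$ — wait, more precisely $f(\uX)$ fails to be positive semidefinite in a way compatible with the kernel constraint — contradicting the hypothesis. This is the delicate bookkeeping step: one must verify that the vector witnessing $\lambda(f)<0$ survives the GNS quotient and lands in (the relevant compression of) $\ker L(\uX)$, so that the hypothesis $f\succeq0$ on $\cD_L$ \emph{together with} $\ker L(\uX)\subseteq\ker f(\uX)$ is genuinely violated.

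The main obstacle I expect is precisely the interplay between the two cone generators and the kernel tracking: in the plain Positivstellensatz one only needs $L(\uX)\succeq0$, but here one must additionally certify the kernel containment, which is why the generators $(q_jL+C_j)^*L(q_jL+C_j)$ with $C_j$ in the commutant of $L$ appear — they encode the "radical along $L$" operations, and it is a real technical point (this is presumably where \cite[Corollary 8.7]{HKV} enters) to show that these suffice, i.e., that there is no gap between the algebraic ideal-like structure $\{C:C L=LC\}$ and the geometric kernel condition. A secondary obstacle is establishing closedness of the truncated cone $\cC$; here the minimality of $L$ and a Carathéodory-type bound on the number of summands, combined with a compactness argument analogous to the one yielding $M+N\le 1+d^2\sum_{j}n^j$ in Theorem \ref{t:pos}, should do the job. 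Once the cone is closed and the GNS machinery is in place, the separation-plus-representation scheme closes the argument exactly as for Theorem \ref{t:pos}.
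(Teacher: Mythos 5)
Your verification of the backward implication is essentially complete and correct: each generator $L\big(\sum_i p_ip_i^*\big)L$ and $(q_jL+C_j)^*L(q_jL+C_j)$ is positive semidefinite at any $\uX\in\cD_L$, and the commutation $C_jL=LC_j$ gives $L(\uX)(C_j\otimes I)v=(C_j\otimes I)L(\uX)v=0$ for $v\in\ker L(\uX)$, so $f(\uX)v=0$. Note, however, that this survey does not prove Theorem \ref{t:rand} at all; it is quoted from \cite{HKN} and \cite{HKV}, so the real question is whether your forward-direction sketch could be completed, and there I see genuine gaps rather than omitted routine details.

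Three concrete problems. First, minimality of $L$ is never actually used in your argument; it appears only in a remark that it makes the kernel condition ``meaningful and tight.'' But the theorem is false without minimality: replacing $L$ by $L\oplus M$ with $\cD_L\subseteq\cD_M$ leaves the free spectrahedron unchanged while changing the commutant $\{C\colon CL=LC\}$ and the kernels $\ker L(\uX)$, so the class of certificates changes. A correct proof must invoke minimality at a specific point (in \cite{HKN,HKV} it is what allows the passage from the geometric condition $\ker L(\uX)\subseteq\ker f(\uX)$ to the algebraic structure of the right-hand side); an argument in which it plays no role cannot be right. Second, you import the degree truncation $\lfloor\deg f/2\rfloor$ and the closedness of the truncated cone ``as in Theorem \ref{t:pos},'' but this does not transfer: the statement of Theorem \ref{t:rand} carries no degree bounds, and the cone generated by $L(pp^*)L$ and $(qL+C)^*L(qL+C)$ does not admit the same normalization/Carath\'eodory argument used for $\sum s_i^*s_i+\sum v_j^*Lv_j$ (where monicity of $L$ controls the Gram data); proving closedness of the relevant cone, or structuring the argument to avoid it, is a substantial portion of \cite{HKN}, not a footnote. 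Third, the GNS step is exactly the point you flag as ``delicate bookkeeping'' and then do not carry out: nonnegativity of the separating functional on your generators only controls $L(\uX)$ compressed to certain subspaces of the GNS space, and one must show that the witness of $\lambda(f)<0$ yields a genuine tuple $\uX\in\cD_L$ violating either $f(\uX)\succeq0$ or the kernel inclusion — this is where the commutant terms $C_j$ and minimality interact, and it is the heart of the proof. As it stands, your proposal reproduces the strategy of \cite{HKN} in outline but leaves open precisely the steps that distinguish the Randstellensatz from the convex Positivstellensatz.
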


\subsection{Noncommutative polynomial optimization on free spectrahedra}\label{s:opt}

An important aspect of Theorem \ref{t:pos} is its application to noncommutative optimization. 
Consider the optimization problem
\begin{equation}\label{e:opt}
\mu_\star=\sup_{\uX\in \cD_L} \, \big[\text{largest eigenvalue of }f(\uX)\big]
\end{equation}
for a given noncommutative polynomial $f$ and a monic hermitian pencil $L$. At first glance, the problem \eqref{e:opt} might seem unwieldy, as the domain of optimization consists of countably many (possibly unbounded) convex sets of increasing dimensions.
However, Theorem \ref{t:pos} implies that the solution to \eqref{e:opt} equals
\begin{equation}\label{e:opt1}
\begin{split}
\mu_\star=\inf_{\mu\in\R}\,\mu\quad \text{such that}\quad
&\mu-f=\sum_i s_i^*s_i+\sum_j v_j^*L v_j \\
&\text{for some }s_i\in\opm_d(\px),\, v_j\in\px{}^{e\times d}\\ 
&\text{with }\deg s_i,\,\deg v_j\le \lfloor\tfrac{f}{2}\rfloor.
\end{split}
\end{equation}
Let us describe \eqref{e:opt1} more concretely.
Let $L$ be a monic hermitian pencil of size $e$, and let $f\in\opm_d(\px)$. Let $\bw$ be a column vector listing all words made from $\ux$ of length at most $\lfloor\frac{\deg f}{2}\rfloor$; then $\bw$ has $\Delta=\sum_{j=0}^{\lfloor\frac{\deg f}{2}\rfloor} n^\delta$ entries. Then \eqref{e:opt1} can be rewritten as
\begin{equation}\label{e:opt2}
\begin{split}
\mu_\star=\inf_{\substack{\mu\in\R,\\
S_0,S_{ij}\in\mtx{d\Delta}\\
\text{for }1\le i,j\le e
}}\,\mu\quad \text{such that}\quad
&\mu-f=(I_d\otimes \bw)^*\left(S_0+\sum_{i,j=1}^e S_{ij}L_{ij}\right)(I_d\otimes \bw), \\
&S_0^*=S_0,\ S_{ij}^*=S_{ji},\\
&S_0\succeq0,\, (S_{ij})_{i,j=1}^e\succeq0.
\end{split}
\end{equation}
Here, $L_{ij}\in \px$ denotes the $(i,j)$ entry of $L$.
It is evident that \eqref{e:opt2} is a semidefinite program. Therefore one can optimize eigenvalues of noncommutative polynomials on convex free semialgebraic sets using the powerful numerical methods from semidefinite programming.
This paradigm is far-reaching; for example, \cite{BKP} presents a general framework for eigenvalue and trace optimization of noncommutative polynomials.

%%%%%%%%%%%%%%%%%%%%%%%%%%%%%%%%%%%%%%%%%%%%%%%%%%%

\end{document}